\newtheorem{theoreme}{Theorem}[section]
\newtheorem{lemme}[theoreme]{Lemma}
\newtheorem{proposition}[theoreme]{Proposition}
\newcounter{paraga}[section]
\renewcommand{\theparaga}{{\bf\arabic{paraga}.}}
\newcommand{\paraga}{\medskip \addtocounter{paraga}{1} 
\noindent{\theparaga\ } }
\begin{document}

\bibliographystyle{amsalpha}

\def\MP{\,{<\hspace{-.5em}\cdot}\,}
\def\SP{\,{>\hspace{-.3em}\cdot}\,}
\def\PM{\,{\cdot\hspace{-.3em}<}\,}
\def\PS{\,{\cdot\hspace{-.3em}>}\,}
\def\EP{\,{=\hspace{-.2em}\cdot}\,}
\def\PP{\,{+\hspace{-.1em}\cdot}\,}
\def\PE{\,{\cdot\hspace{-.2em}=}\,}
\def\N{\mathbb N}
\def\C{\mathbb C}
\def\Q{\mathbb Q}
\def\R{\mathbb R}
\def\T{\mathbb T}
\def\A{\mathbb A}
\def\Z{\mathbb Z}
\def\demi{\frac{1}{2}}
\def\eps{\varepsilon}
\def\pfrac#1#2{{\textstyle{#1\over#2}}}
\def\abs#1{\vert #1\vert}

\begin{titlepage}
\author{Abed Bounemoura~\footnote{Laboratoire de Mathématiques d'Orsay et Institut de Mathématiques de Jussieu} {} and 
Jean-Pierre Marco~\footnote{Institut de Mathématiques de Jussieu}}
\title{\LARGE{\textbf{Improved exponential stability for near-integrable quasi-convex Hamiltonians}}}
\end{titlepage}

\maketitle

\begin{abstract}
In this article, we improve previous results on exponential stability for analytic and Gevrey perturbations of quasi-convex integrable Hamiltonian systems. In particular, this provides a sharper upper bound on the speed of Arnold diffusion which we believe to be optimal.
\end{abstract}
  
\section{Introduction}

This paper deals with some stability properties of near-integrable Hamiltonian systems of the form
\begin{equation*}
\begin{cases} 
H(\theta,I)=h(I)+f(\theta,I) \\
|f| < \varepsilon <\!\!<1
\end{cases}
\end{equation*}
where $(\theta,I) \in \T^n \times \R^n$ are action-angle coordinates for the integrable part $h$ and $f$ is a small perturbation, of size $\varepsilon$ in some suitable topology defined by a norm $|\,.\,|$. If the system is analytic and under a suitable quantitative transversality condition called {\it steepness}, Nekhoroshev (\cite{Nek77}, \cite{Nek79}) proved that the action variables $I(t)$ are stable for an exponentially long time, in the sense that for $\varepsilon$ sufficiently small, one has
\[ |I(t)-I_0| \leq c_1\varepsilon^b, \quad |t|\leq c_2\exp(c_3\varepsilon^{-a}) \] 
for any initial action $I_0$. The constants $c_1,c_2,c_3,a$ and $b$ depend only $h$, one calls $a$ and $b$ the {\it stability exponents} and among them the value of $a$ is of course the most important one, as it specifies the time-scale of stability. Nekhoroshev's estimates complement the well-known KAM theory (\cite{Kol54}, see also \cite{Pos01} for a nice survey) which gives, under some mild non-degeneracy condition on $h$ and for $\varepsilon$ small enough, the existence of a constant $c$ such that
 \[ |I(t)-I_0| \leq c\sqrt\varepsilon \]
for any time $t\in\R$, but only for a strict subset  (of large relative measure) of the set of initial conditions. Of course KAM theory gives much more information, these stable solutions are in fact quasi-periodic and $\sqrt \varepsilon$-close to the corresponding unperturbed solutions. In particular, for $n=2$ and in the case when $h$ is iso-energetically non-degenerate, this stability property even holds for all solutions. On the contrary, for $n\geq 3$, following Arnold (\cite{Arn64}) one can find examples of near-integrable Hamiltonian systems with a solution satisfying
\[ |I(\tau)-I_0| \geq 1 \]
for some time $\tau=\tau(\varepsilon)>0$, no matter how small the perturbation is. Such instability is commonly referred to as Arnold diffusion. Obviously Nekhoroshev's estimates give a lower bound on the diffusion time $T$ (or equivalently an upper bound on the diffusion speed) which is exponentially large (or exponentially small when referring to the rate of diffusion). 

This paper is concerned with the precise time-scale at which stability breaks down and instability takes place, by which we mean the precise value of the exponent $a$, in the special case where the unperturbed Hamiltonian $h$ is quasi-convex (that is convex when restricted to its level subsets). 

The quasi-convex case, which is of both practical and theoretical interest, has been widely studied in Nekhoroshev theory, essentially for two reasons. First the proof is much easier in this situation and a more refined result is available: the stability exponents may be chosen as
\[ a=b=(2n)^{-1}. \] 
These facts are best illustrated by the striking proof given by Lochak (\cite{Loc92}, see also \cite{LN92} and \cite{LNN94}), though they are also accessible via a more traditional approach as was shown by Pöschel (\cite{Pos93}). Note also that these values have been generalized by Niederman in the steep case (\cite{Nie04}) using both ideas of Lochak and Pöschel. Yet there is another reason for which the quasi-convex case is interesting, which is the so-called {\it stabilization by resonances}: if a solution starts close to a resonance of multiplicity $m$, $m<n$, then it posses better stability properties, described by the ``local" exponents
\[ a_m=b_m=(2(n-m))^{-1}. \]  
This is a quite surprising fact, as it shows that even though resonances are the main cause of diffusion, at the same time they improve finite time stability. However, this property certainly does not hold without some convexity assumption (in the steep case for instance).

The optimality of the exponent $a$, in connection with the maximal speed of Arnold diffusion, has been first studied by Bessi who introduced powerful variational methods to revisit Arnold's example and estimate the speed of diffusion. In \cite{Bes96} and \cite{Bes97}, he proved that the latter is of order $\exp\big(\varepsilon^{-\frac{1}{2}}\big)$ for $n=3$ and $\exp\big(\varepsilon^{-\frac{1}{4}}\big)$ for $n=4$. Moreover, in Bessi's example the solution passes close to a double resonance, and so the speed is the highest possible in this case, in view of the values of the local exponent $a_2$ for $n=3$ and $n=4$. Recently, using similar variational arguments, this result has been generalized to an arbitrary number of degrees of freedom $n$ by Ke Zhang (\cite{KZ09}), namely he constructed a special orbit passing close to a double resonance for which the speed of diffusion is estimated by $\exp\big(\varepsilon^{-\frac{1}{2(n-2)}}\big)$.  

Another approach has been proposed by Marco-Sauzin (\cite{MS02}) and Lochak-Marco (\cite{LM05}), following novel ideas of Herman. In \cite{MS02} the authors showed that Nekhoroshev's estimates extend to perturbations of quasi-convex Hamiltonians which are $\alpha$-Gevrey regular, $\alpha\geq 1$, with the exponents
\[ a=(2\alpha n)^{-1}, \quad b=(2n)^{-1} \]
and local exponents
\[ a_m=(2\alpha(n-m))^{-1}, \quad b_m=(2(n-m))^{-1}. \]  
Note that $1$-Gevrey functions are exactly analytic functions, and basically when $\alpha$ ranges from one to infinity $\alpha$-Gevrey functions interpolate between analytic and $C^\infty$ functions. Therefore this result generalizes the estimates in the analytic case. Using a geometric mechanism different and more precise than Arnold's one, in \cite{MS02} the authors constructed a drifting orbit with speed of order $\exp\big(\varepsilon^{-\frac{1}{2\alpha(n-2)}}\big)$ in the non-analytic case, that is when $\alpha>1$. Adding some more technical ideas, it was shown in~\cite{LM05} that the example also works in the analytic case but the speed was estimated as $\exp\big(\varepsilon^{-\frac{1}{2(n-3)}}\big)$, which is only close to optimal
(however refinements are certainly possible to reach the value $(2(n-2))^{-1}$ in this class of examples).

Therefore, if the unperturbed Hamiltonian is quasi-convex, the best exponent of stability $a$ up to now satisfy
\[ (2n)^{-1} \leq a < (2(n-2))^{-1} \]
in the analytic case and more generally 
\[ (2\alpha n)^{-1} \leq a < (2\alpha(n-2))^{-1} \]
in the Gevrey case. The goal of this paper is to improve the lower bound both in the analytic or Gevrey case, so as to have
\[ (2(n-1))^{-1}-\delta \leq a < (2(n-2))^{-1} \]  
and 
\[ (2\alpha(n-1))^{-1}-\delta \leq a < (2\alpha(n-2))^{-1} \] 
for $\delta>0$ but arbitrarily small (see Theorem~\ref{exposantNek} and Theorem~\ref{exposantNekG} in the next section). We believe that this bound is optimal, in the sense that one could reach the value $(2(n-1))$ in Arnold diffusion, using a significantly different mechanism of instability.

\section{Main results}

\paraga In order to state our main results, let us now describe our setting more precisely, beginning with the analytic case. Let $B=B(0,R)$ be the open ball of $\R^n$ of radius $R>0$, with respect to the supremum norm, centered at the origin. Given $s>0$, we let $\mathcal{A}_s(\mathcal{D})$ the space of bounded real-analytic functions on $\mathcal{D}=\T^n \times B$ which extend as holomorphic functions on the complex domain
\[ \mathcal{D}_{s}=\{(\theta,I)\in(\C^n/\Z^n)\times \C^{n} \; | \; |\mathcal{I}(\theta)|<s,\;d(I,B)<s\}, \] 
and which are continuous on the closure of $\mathcal{D}_{s}$. Here we denoted by $\mathcal{I}(\theta)$  the imaginary part of $\theta$, by $|\,.\,|$ the supremum norm on $\C^n$ and by $d$ the associated distance on $\C^n$. It is well-known that $\mathcal{A}_s(\mathcal{D})$ is a Banach space with its usual supremum norm $|\,.\,|_s$, where
\[ |f|_s=\sup_{z\in\mathcal{D}_{s}}|f(z)|, \quad f\in\mathcal{A}_s(\mathcal{D}). \]
In the following, we shall denote by
\[ B_s=\{I\in\R^n \; | \; d(I,B)<s\} \]
the real part of our domain $\mathcal{D}_s$ in action space. The geometric parameters $n,R,s$ are assumed to be chosen  once and for all in the following.

We now introduce the parameters related to the choice of the system. The integrable part $h:B_s\to\R$ will be assumed to be strictly quasi-convex: the gradient map $\nabla h$ does not vanish and there exists a positive number $m$ such that 
\begin{equation}\label{mconvex} \tag{$QC(m)$}
\nabla^2 h(I)v.v \geq m|v|^2 
\end{equation} 
holds for any $I\in B_s$ and any $v$ orthogonal to $\nabla h(I)$ (with respect to the Euclidean scalar product). Moreover, the derivatives up to order 3 of $h$ on $B_s$ are assumed to be bounded: there exist $M>0$ such that for all $I\in B_s$, one has
\begin{equation}\label{bounded} \tag{$B(M)$}
|\partial ^k h(I)|\leq M, \quad 1\leq |k|\leq 3.
\end{equation}
Therefore we will consider
systems of the form
\begin{equation}\label{Hamexpo}
\begin{cases} \tag{$C(M,m,\varepsilon)$}
H(\theta,I)=h(I)+f(\theta,I), \\
h\in \mathcal{A}_s(\mathcal{D}),\ f \in \mathcal{A}_s(\mathcal{D}),\\
h \;  \text{satisfies} \; (QC(m))  \;\text{and}\; (B(M)),\\
|f|_{s}<\varepsilon.
\end{cases}
\end{equation}

Note that we get rid of the geometric parameters in the notation.
 In the following we will call {\em stable constant} (in the analytic case)
any positive constant $c$ which depends on the whole set of parameters, that is 
$n,R,s,M,m$, together with a parameter $\delta$ or $\rho$ to be defined below, but not on a particular choice of $H$ satisfying condition $(C(M,m,\varepsilon))$.

\paraga The main result of the paper is the following.

\begin{theoreme}\label{exposantNek}
Consider a real number $\delta$ satisfying
\[ 0<\delta \leq (2n(n-1))^{-1}. \]
Then there exist stable constants $c_1$, $c_2$, $c_3$ and $\varepsilon_0$ such that if  $0\leq\varepsilon \leq \varepsilon_0$, and if $H$ satisfies $(C(M,m,\varepsilon))$, the following estimates
\[ |I(t)-I_0| \leq c_1\varepsilon^{\delta(n-1)}, \quad |t| \leq c_2 \exp\left(c_3\varepsilon^{-\frac{1}{2(n-1)}+\delta}\right) \]
hold true for every initial action $I_0 \in B(0,R/2)$. 

Moreover, consider a real number $\rho$ satisfying $0<\rho<R/2$. Then there exist stable constants $c_4$, $c_5$ and $\tilde{\varepsilon}_0$ such that if $0\leq\varepsilon \leq \tilde{\varepsilon}_0$, then
\[ |I(t)-I_0| \leq \rho, \quad |t|\leq c_4 \exp\big(c_5\varepsilon^{-\frac{1}{2(n-1)}}\big) \]
for every $I_0 \in B(0,R/2)$.
\end{theoreme}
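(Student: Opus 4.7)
The approach is to refine Lochak's periodic-orbit proof of Nekhoroshev's theorem in the quasi-convex case by exploiting energy conservation to reduce the effective dimension of the Dirichlet approximation from $n$ to $n-1$. In Lochak's classical scheme, one approximates the initial frequency $\omega_0 = \nabla h(I_0)$ by a periodic frequency $\omega_T = k/T$ via simultaneous Dirichlet approximation in $\R^n$; this is what produces the exponent $1/(2n)$. Here I would use that, by quasi-convexity, the image of the energy level $\{h = h(I_0)\}$ under $\nabla h$ is locally an $(n-1)$-dimensional hypersurface $\Sigma$ in frequency space, and only the projection of $\omega_0$ onto $\Sigma$ needs to be well-approximated: an error in $\omega_T$ along the normal direction $\nabla h(I_0)$ is harmless, since such a component cannot be explored by the trajectory up to an $O(\varepsilon)$ error coming from $(QC(m))$ and energy conservation. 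Applying Dirichlet in $n-1$ variables on $\Sigma$ is exactly what improves the exponent to $1/(2(n-1))$.

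Concretely, I would first fix $I_0 \in B(0, R/2)$ and $\omega_0 = \nabla h(I_0)$, and use an $(n-1)$-dimensional Dirichlet-type lemma on a local chart of $\Sigma$ near $\omega_0$ to produce $T \in \N$ with $1 \leq T \leq T_0$ and $k \in \Z^n$ such that $\omega_T := k/T$ is close to $\omega_0$ transversally to $\nabla h(I_0)$, at the best rate allowed by Dirichlet in dimension $n-1$. Next, a standard analytic resonant normal form at the periodic frequency $\omega_T$ with ultraviolet cutoff $K$ proportional to $T_0$ (as in \cite{Loc92} or \cite{Pos93}) yields a symplectic conjugation $H \circ \Phi = h + g + r$, where $g$ is in $\omega_T$-resonant normal form, so that $h$ and $\omega_T \cdot I$ are formal first integrals of $h+g$, and $|r|_{s'} \leq C\exp(-cT_0)$. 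Finally, Lochak's convex-trapping lemma near a periodic orbit combines these two approximate conservation laws with $(QC(m))$ and the transversal closeness of $\omega_T$ to $\omega_0$ to confine the actions to a ball of radius $c_1\varepsilon^{\delta(n-1)}$ around $I_0$ on time scales $\exp(cT_0)$. Balancing parameters yields $T_0 \sim \varepsilon^{-1/(2(n-1)) + \delta}$. The second statement, with $\rho$-confinement on times $\exp(c\varepsilon^{-1/(2(n-1))})$, follows from the same scheme with the larger choice $T_0 \sim \varepsilon^{-1/(2(n-1))}$, at the price of a fixed trapping radius $\rho$ in place of the $\varepsilon$-small radius $\varepsilon^{\delta(n-1)}$.

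The main obstacle is the geometric step: implementing simultaneous Dirichlet approximation on a curved hypersurface of frequency space, uniformly in $I_0 \in B(0, R/2)$, with constants depending only on $n, R, s, M, m$. This will require covering $B(0, R/2)$ by finitely many charts on which $\Sigma$ is close to an affine hyperplane, and establishing an explicit Dirichlet-type lemma in $n-1$ variables that accommodates the small curvature mismatch. The small loss $\delta > 0$ in the exponent reflects exactly this maneuver: it absorbs the geometric mismatch between $\Sigma$ and its affine approximants as well as the slack in the balance between the Dirichlet bound, the ultraviolet cutoff, and the normal-form remainder. All other ingredients --- resonant analytic normal form, convex confinement near a periodic orbit, energy conservation --- are classical in this area and only need to be recalibrated to the new parameter balance.
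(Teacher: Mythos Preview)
Your proposal is plausible but takes a genuinely different route from the paper, and the step you flag as ``the main obstacle'' is exactly where the paper does something cleverer and more elementary.

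The paper does \emph{not} attempt an $(n-1)$-dimensional Dirichlet approximation of $\omega_0$ at the initial time. Instead it argues by a dichotomy along the orbit. Fix $K$ and let $R_K$ be the union of all rank-one resonant hyperplanes of order $\leq K$. The two ingredients are: (i)~P\"oschel's local stability at a rank-one resonance (Lemma~\ref{lemmePos}), which gives confinement of order $(\varepsilon K^2)^{1/(2(n-1))}$ for a time $\exp\bigl(\cdot(\varepsilon K^2)^{-1/(2(n-1))}\bigr)$ once the orbit is \emph{on} $R_K$; and (ii)~a purely geometric lemma (Lemma~\ref{lemmenonresonant}) saying that if $\omega(t)\notin R_K$ for all $t\in[0,\tau[$, then $|I(t)-I_0|\MP K^{-2}$ on that interval. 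The proof of~(ii) uses iso-energetic non-degeneracy (Lemma~\ref{iso-energetic}, the map $(I,\lambda)\mapsto (h(I),\lambda\omega(I))$ is a local diffeomorphism): if the action drifted by more than $cK^{-2}$, then, since $h$ is conserved up to $\varepsilon$, some ratio $\omega_i(t)/|\omega(t)|$ would sweep an interval of length $\geq 32K^{-2}$ in $[-1,1]$, and a \emph{one-dimensional} Dirichlet argument (Lemma~\ref{lemmeratio}) forces that interval to contain a rational $p/q$ with $|p|+|q|<K$, producing a simple resonance $q\omega_i-p\omega_j=0$ and a contradiction. The theorem then follows by choosing $K\sim\varepsilon^{-\gamma}$: either the orbit never meets $R_K$ and~(ii) gives perpetual confinement, or it meets $R_K$ at some first time $t^*$ and one applies~(ii) on $[0,t^*]$ and~(i) from $t^*$ onward.

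Compared with your plan, the paper replaces the hard step ``simultaneous Dirichlet on a curved $(n-1)$-surface, uniformly in $I_0$'' by the soft step ``follow the orbit until a single frequency ratio becomes rational''. This needs only one-dimensional rational approximation and P\"oschel's ready-made rank-one estimate; no charts, no curvature control, no projective Dirichlet. What your approach would buy, if it goes through, is a proof living entirely inside Lochak's periodic-orbit framework, without invoking P\"oschel's resonant-block machinery.

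One caution about your scheme: the assertion that ``an error in $\omega_T$ along $\nabla h(I_0)$ is harmless'' is not covered by the standard Lochak trapping lemma. In that lemma one works near a periodic action $I^*$ with $\nabla h(I^*)=k/T$, and the confinement uses $|I_0-I^*|$ small. If you only control the component of $\omega_0-k/T$ orthogonal to $\omega_0$, the normal component can be $O(1)$, and then $I^*$ sits on a different energy level, possibly far from $I_0$; the averaging along the $T$-periodic flow is also no longer adapted to the actual orbit. You would need either an intrinsic Diophantine result on $S^{n-1}$ (approximating the \emph{direction} $\omega_0/|\omega_0|$ by a rational direction $k/|k|$ with $|k|\leq K$ to accuracy $K^{-n/(n-1)}$, then rescaling to match magnitudes) or a modified trapping argument that explicitly separates the radial and tangential errors. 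Either is probably feasible, but neither is the ``classical'' ingredient you describe, and this is precisely the work the paper's dichotomy avoids.
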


Choosing our constant $\delta$ arbitrarily close to zero, our result ensures stability for a time-scale which is arbitrarily close to $\exp\big(\varepsilon^{-\frac{1}{2(n-1)}}\big)$, therefore we improve the previous results of stability obtained independently by Lochak-Neishtadt (\cite{Loc92} and \cite{LN92}) and Pöschel (\cite{Pos93}), which were believed to be optimal.  

In fact in the extreme case where $\delta=(2n(n-1))^{-1}$, which in our situation gives the worst stability time (but of course the best radius of confinement), our result reads
\[ |I(t)-I_0| \leq c_1\varepsilon^{\frac{1}{2n}}, \quad |t|\leq c_2 \exp\big(c_3\varepsilon^{-\frac{1}{2n}}\big) \] 
and we recover the previous result of stability. Hence, when our parameter $\delta$ ranges from $(2n(n-1))^{-1}$ to zero, our Theorem ``interpolates" between previous results of stability and what should be the optimal stability. 

Indeed, in the other extreme case which corresponds to the second part of our Theorem, our result does not give stability since the radius of confinement can be arbitrarily small but no longer tends to $0$ with $\varepsilon$. We believe that this is not an artefact of the method and that instability should occur at this precise time-scale, at a time of order $\exp\big(\varepsilon^{-\frac{1}{2(n-1)}}\big)$. We plan to construct an example with an unstable orbit which has a drift of order one during such an interval of time. This necessitates to use a more refined instability mechanism in the neighbourhood of double resonances, a topic which is also crucial in connection with the problem of genericity of Arnold diffusion.

\paraga Our result also holds if the Hamiltonian is only Gevrey regular. Let us recall that given $\alpha \geq 1$ and $L>0$, a function $H\in C^{\infty}(\mathcal{D})$ is $(\alpha,L)$-Gevrey if, using the standard multi-index notation, we have
\[ |H|_{\alpha,L}=\sum_{l\in \N^{2n}}L^{|l|\alpha}(l!)^{-\alpha}|\partial^l H|_\mathcal{D} < \infty \]
where $|\,.\,|_\mathcal{D}$ is the usual supremum norm for functions on $\mathcal{D}$. The space of such functions, with the above norm, is a Banach space that we denote by $G^{\alpha,L}(\mathcal{D})$. Analytic functions are a particular case of Gevrey functions, as one can check that $G^{1,L}(\mathcal{D})=\mathcal{A}_{L}(\mathcal{D})$.

Let us introduce the main condition on the Hamiltonian systems in the Gevrey case
\begin{equation}\label{HamexpoG}
\begin{cases} \tag{$C(\alpha,L,M,m,\varepsilon)$}
H(\theta,I)=h(I)+f(\theta,I), \\
h\in G^{\alpha,L}(\mathcal{D}),\ f \in G^{\alpha,L}(\mathcal{D}),\\
h \;  \text{satisfies} \; (QC(m))  \;\text{and}\; (B(M)),\\
|f|_{\alpha,L}<\varepsilon.
\end{cases}
\end{equation}

We now call {\em stable constant} (in the Gevrey case)  any positive constant $c$ which depends on the whole set of parameters, that is $\alpha,L,n,R,s,M,m$, together with a parameter $\delta$ or $\rho$ which will be defined below, but not on a particular choice of $H$ satisfying condition $(C(\alpha,L,M,m,\varepsilon))$.

\paraga Our second result is the following Gevrey version of Theorem \ref{exposantNek}.

\begin{theoreme}\label{exposantNekG}
Consider a real number $\delta$ satisfying
\[ 0<\delta \leq (2\alpha n(n-1))^{-1}. \]
Then there exist stable constants $c_1'$, $c_2'$, $c_3'$ and $\varepsilon_0'$ such that if $0\leq\varepsilon \leq \varepsilon_0'$, and if $H$ satisfies $C(\alpha,L,M,m,\varepsilon)$, the following estimates
\[ |I(t)-I_0| \leq c_1'\varepsilon^{\frac{2\delta}{5(n-1)}}, \quad |t| \leq c_2' \exp\left(c_3'\varepsilon^{-\frac{1}{2\alpha(n-1)}+\delta}\right), \]
hold true for every initial action $I_0 \in B(0,R/2)$. 

Moreover, consider a real number $\rho$ satisfying $0<\rho<R/2$. Then there exist stable constants $c_4'$, $c_5'$ and $\tilde{\varepsilon}_0'$ such that if $0\leq\varepsilon \leq \tilde{\varepsilon}_0'$, then
\[ |I(t)-I_0| \leq \rho, \quad |t|\leq c_4' \exp\big(c_5'\varepsilon^{-\frac{1}{2\alpha(n-1)}}\big) \]
for every $I_0 \in B(0,R/2)$.
\end{theoreme}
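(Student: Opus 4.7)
The natural approach is to reduce Theorem \ref{exposantNekG} to the analytic Theorem \ref{exposantNek} by approximating the Gevrey perturbation $f$ by analytic functions on shrinking complex strips, in the spirit of Marco--Sauzin \cite{MS02}. The case $\alpha=1$ is nothing but Theorem~\ref{exposantNek}, since $G^{1,L}(\mathcal{D}) = \mathcal{A}_L(\mathcal{D})$, so I would focus on $\alpha>1$.

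The key ingredient is an approximation lemma: for every $\sigma\in(0,L]$ one can decompose $f = \tilde{f}_\sigma + r_\sigma$, where $\tilde{f}_\sigma \in \mathcal{A}_\sigma(\mathcal{D})$ satisfies $|\tilde{f}_\sigma|_\sigma \leq K\varepsilon$ and the remainder is exponentially small in $C^1$:
\[
|r_\sigma|_{C^1(\mathcal{D})} \leq K\varepsilon\,\exp\!\bigl(-\kappa(L/\sigma)^{1/(\alpha-1)}\bigr),
\]
with constants $K,\kappa$ depending only on $\alpha,L,n,R$. Such a lemma is classical (convolution with an analytic mollifier) and a version already appears in \cite{MS02}. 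Applying Theorem \ref{exposantNek} to the analytic Hamiltonian $h+\tilde{f}_\sigma$ on the strip of width $\sigma$ (with perturbation size $\leq K\varepsilon$ and a free parameter $\delta'\in(0,(2n(n-1))^{-1}]$) yields, for the truncated flow $\tilde{\Phi}^t$,
\[
|\tilde{I}(t)-I_0| \leq c_1(\sigma)\varepsilon^{\delta'(n-1)}, \quad |t|\leq c_2(\sigma)\exp\!\bigl(c_3(\sigma)\,\varepsilon^{-\frac{1}{2(n-1)}+\delta'}\bigr).
\]
A Gronwall comparison with the true flow $\Phi^t$ bounds the action drift between them over a time $T$ by $T\cdot|r_\sigma|_{C^1(\mathcal{D})}$, which stays negligible as long as $T\leq \varepsilon^{-1}\exp(\kappa(L/\sigma)^{1/(\alpha-1)})$.

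The balance is then transparent: one picks $\sigma = c\,\varepsilon^{(\alpha-1)/(2\alpha(n-1))}$, which makes $(L/\sigma)^{1/(\alpha-1)} \sim \varepsilon^{-1/(2\alpha(n-1))}$, exactly matching the desired Gevrey stability exponent. The main technical obstacle is tracking the $\sigma$-dependence of the ``stable constants'' $c_i(\sigma), \varepsilon_0(\sigma)$ of Theorem \ref{exposantNek}, since $\sigma\to 0$ as $\varepsilon\to 0$. One must revisit the proof of Theorem \ref{exposantNek} to verify that this dependence is at worst polynomial (a standard feature of Nekhoroshev-type estimates), so that its contribution is absorbed into the slack exponent $\delta'$. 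The slight weakening of the confinement exponent from $\delta(n-1)$ in the analytic case to $2\delta/(5(n-1))$ in the Gevrey one, as well as the precise relation between $\delta$ and $\delta'$, emerges from this bookkeeping once the polynomial losses in $\sigma$ are made explicit. The second part of the theorem (with fixed $\rho$) follows at once by running the same reduction on the second part of Theorem~\ref{exposantNek}.
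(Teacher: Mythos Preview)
Your approach is genuinely different from the paper's, and as written it has real gaps.

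The paper does \emph{not} reduce the Gevrey case to the analytic Theorem~\ref{exposantNek}. Instead it quotes directly the Gevrey stability result of Marco--Sauzin (Theorem~\ref{theoremSau}), which already provides exponential confinement near a resonance $R_\Lambda$ with exponents involving the Lochak constants $c_\Lambda, c_\Lambda'$. The new technical work is Proposition~\ref{propEucl2}, an arithmetic construction (an inductive completion of a primitive integer vector to a matrix in $GL(n,\Z)$ with controlled row norms) giving $c_\Lambda \le n!K^{n-1}$ and $c_\Lambda'\le K$ for rank-one maximal $K$-submodules. Plugging this into Theorem~\ref{theoremSau} yields Lemma~\ref{lemmeSau}, the Gevrey analogue of Lemma~\ref{lemmePos}, with a time exponent $(\varepsilon K^{5(n-1)^2})^{-1/(2\alpha(n-1))}$. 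The geometric Lemma~\ref{lemmenonresonant} is regularity-independent and carries over unchanged. One then takes $K\sim\varepsilon^{-\gamma}$ with $0<\gamma\le 5^{-1}(n-1)^{-2}$ and sets $\delta=\tfrac{5}{2}\gamma(n-1)$; the confinement exponent $\gamma = 2\delta/(5(n-1))$ in the statement is a direct consequence of the power $5(n-1)^2$ appearing in Lemma~\ref{lemmeSau}.

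Your analytic-approximation route has two concrete problems. First, the ``Gronwall comparison'' bounding the action drift between the true and approximate flows by $T\cdot|r_\sigma|_{C^1}$ is not valid over exponentially long times: the difference between the two flows is governed by the Lipschitz constant of the full vector field, not just by the size of $r_\sigma$, and the true orbit $(\theta(t),I(t))$ has no reason to track the approximate one for $t\sim\exp(\varepsilon^{-a})$. To control the actions of the true flow one must pull back the normal form of $h+\tilde f_\sigma$ to $H$ itself and estimate $r_\sigma$ \emph{after} the conjugacy; this forces you inside the Nekhoroshev normal form construction, which is exactly what \cite{MS02} does---the analytic approximation is performed at each step of the iteration, not once at the end. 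Second, your claim that the specific confinement exponent $2\delta/(5(n-1))$ ``emerges from the bookkeeping'' is unsupported: that factor $5(n-1)$ is inherited from the Marco--Sauzin normal form together with the bound $c_\Lambda\le n!K^{n-1}$, and nothing in your reduction would reproduce it. A correct implementation of your idea would yield \emph{some} Gevrey stability exponent, but not the one in the statement without essentially reproving Theorem~\ref{theoremSau}.
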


The same remarks as above apply in the Gevrey case. In particular $\delta$ can be chosen arbitrarily close to zero and our result ensures stability for an interval of time which is arbitrarily close to $\exp\big(\varepsilon^{-\frac{1}{2\alpha(n-1)}}\big)$. However, our radius of stability is worse than in the analytic case, so we do not fully recover the result obtained in \cite{MS02}, but of course the time of stability is the most important issue.

\paraga  To avoid cumbersome expressions in the following, when there is no risk of confusion
we will replace the stable constants with a dot.  More precisely, an assertion of the form ``there exists a stable constant $c$ such
that $f < c\,g$'' will be simply replaced with ``$f\MP g$'', when the context is clear. {\em Such modifications will only concern assertions
stating the existence of stable constants, and dealing with equalities or (strict or large) inequalities of well-defined functions}.

\paraga In the rest of the paper, as usual and without loss of generality, we will only consider solutions starting at time $t=0$ and evolving in positive time, and initial conditions will be denoted by $(I_0,\theta_0)=(I(0),\theta(0))$.

\paraga Finally in the rest of this text all the norms will be denoted by $|\,.\,|$, this will always be the supremum norm for vectors and the induced norm for matrices, except for vectors in $\Z^n$ for which $|\,.\,|$ will stand for the $\ell^1$--norm. 

\section{The analytic case}

In this section, the geometric constants $n,s,R$ together with the constants $m$ and $M$ are fixed once and for all.

The proof of Theorem~\ref{exposantNek} relies on two elementary facts. The first one, which we recalled in the introduction, is the stabilizing effect of resonances: on account of quasi-convexity, solutions of any Hamiltonian satisfying~(\ref{Hamexpo}) and starting sufficiently close to a resonance are stable for a longer interval of time. Of course this concerns only some special solutions, but we need to consider all of them. Our second remark is that to deal with  the remaining ones, one can take advantage of the geometry of resonances in the integrable system to obtain a confinement result. More precisely, using the iso-energetic non-degeneracy implied by our quasi-convexity assumption, we will show that solutions that avoid all resonances are necessarily stable for all time. 

\bigskip

\paraga Let us begin by making the first point explicit, using Pöschel's approach of Nekhoroshev's theory. We shall denote by $\Omega=\nabla h(B)$ the space of frequencies. Let $\Lambda$ be a sub-module of $\Z^n$ of rank $r$, with $r\in\{1,\dots,n\}$. The resonant space associated with $\Lambda$ is defined by
\[ R_{\Lambda}=\{\omega\in \Omega \; | \; k.\omega=0, \; \forall k\in\Lambda\}. \]
Given a real number $K \geq 1$, we will say that $\Lambda$ is a $K$-sub-module if it admits a $\Z$-basis $\{k^1, \dots, k^r\}$ satisfying $|k^i|\leq K$ for $i\in\{1,\dots,r\}$, where 
\[|k^i|=|k^i_1|+\cdots+|k^i_n|.\]
As usual, it is enough to consider only maximal sub-modules, which are those that are not strictly contained in any other sub-module of the same rank. Given such a sub-module, we define its volume by
\[ |\Lambda|=\sqrt{\det {}^t\!MM} \]
where $M$ is any $n\times r$ matrix whose columns form a basis for $\Lambda$ (this is easily seen to be independent of the choice of such a matrix). The following stability Theorem is due to Pöschel. 

\begin{theoreme}[Pöschel]\label{theoremPos}
Let $\Lambda$ be a $K$-sub-module of $\Z^n$ of rank $r$, with $r\in\{0,\dots,n-1\}$. Assume that $\eps\geq 0$ and $K\geq 1$ satisfy
\begin{equation*}
\varepsilon |\Lambda|^{2} K^{2(n-r)} \MP 1
\end{equation*}
and $H$ satisfies~(\ref{Hamexpo}). Then, if $\omega(0)=\nabla h(I_0)$, for any solution $(\theta(t),I(t))$ with $I_0\in B(0,R/2)$ and $d(\omega(0),R_\Lambda) \MP \sqrt \varepsilon$ the following estimates
\[ |I(t)-I_0| \MP \left(\varepsilon |\Lambda|^{2}\right)^{\frac{1}{2(n-r)}}, \quad t \MP \exp\left(\cdot \varepsilon |\Lambda|^{2}\right)^{-\frac{1}{2(n-r)}},\]
hold true.
\end{theoreme}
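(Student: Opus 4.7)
The plan is to follow P\"oschel's two-step strategy in \cite{Pos93}: first put $H$ into a $\Lambda$-resonant analytic normal form with an exponentially small remainder, then combine the approximate conservation laws of that normal form with the quasi-convexity of $h$ to obtain a geometric confinement of the actions.

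First I would fix a real action domain $D_\Lambda \subset B$ on which $d(\nabla h(I), R_\Lambda) \MP \sqrt{\varepsilon}$, together with a complex widening of it in both $\theta$ and $I$. On this domain I would run the standard Lie-series averaging scheme in which each step eliminates the Fourier modes $e^{i k \cdot \theta}$ with $k \notin \Lambda$ and $|k| \leq K$. The small divisors to be inverted are $k \cdot \nabla h(I)$; by the closeness to $R_\Lambda$ and the maximality of $\Lambda$ (which bounds below the distance from any $k \notin \Lambda$, $|k|\leq K$, to $\Lambda \otimes \R$ in terms of $|\Lambda|^{-1} K^{-(n-r-1)}$), one obtains a uniform lower bound $|k \cdot \nabla h(I)| \MP |\Lambda|^{-1} K^{-(n-r-1)}$ after a slight shrinking of the frequency window. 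The smallness hypothesis $\varepsilon |\Lambda|^{2} K^{2(n-r)} \MP 1$ is precisely what is needed for the iterative scheme to converge on a slightly shrunk analytic domain and produce a symplectic $\Phi$ with
\[ H \circ \Phi = h + g + f^{*}, \qquad g\ \Lambda\text{-resonant}, \qquad |f^{*}|_{s/2} \MP \varepsilon\, e^{-K \sigma}, \]
for some stable $\sigma > 0$.

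Next I would exploit the structure of $h + g$. Since $g$ depends on the angles only through combinations $k \cdot \theta$, $k \in \Lambda$, the flow of $h + g$ preserves the components of $I$ transverse to $\text{span}(\Lambda)$; hence along the true flow these components drift only under $f^{*}$, at a rate $\MP \varepsilon\, e^{-K \sigma}$, which is negligible over the proposed time window. Along $\text{span}(\Lambda)$ itself I would invoke the Lochak-style convex trapping: energy conservation for $h + g + f^{*}$, combined with the first-order almost orthogonality $\nabla h(I_0) \cdot v \MP \sqrt{\varepsilon}\,|v|$ for $v \in \text{span}(\Lambda)$ (forced by the hypothesis $d(\omega(0), R_\Lambda) \MP \sqrt{\varepsilon}$) and the second-order bound $\nabla^{2} h(I_0) \cdot v \cdot v \geq m |v|^{2}$ (from $(QC(m))$), traps the displacement $v(t) = I(t) - I_0$ in a ball whose radius, after the natural rescaling in the $\Lambda$-directions that converts $|g|$ into an effective perturbation size $\varepsilon |\Lambda|^{2}$, is bounded by $(\varepsilon |\Lambda|^{2})^{1/(2(n-r))}$.

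Finally, I would choose $K$ to saturate the smallness hypothesis, namely $K \sim (\varepsilon |\Lambda|^{2})^{-1/(2(n-r))}$; this balances the two contributions and yields both the confinement radius $(\varepsilon |\Lambda|^{2})^{1/(2(n-r))}$ and the stability time $\exp(\,\cdot\,(\varepsilon |\Lambda|^{2})^{-1/(2(n-r))})$, as claimed. The main obstacle in a self-contained proof is the uniform bookkeeping of analyticity losses along the Lie-series iteration, together with the arithmetic lemma showing that on a maximal $K$-sub-module of rank $r$ the relevant small divisors scale precisely as $|\Lambda|^{-1} K^{-(n-r-1)}$; both are treated in detail in \cite{Pos93}, to which I would ultimately refer for the quantitative bounds.
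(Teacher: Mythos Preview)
The paper does not give its own proof of this theorem: immediately after the statement it simply records that ``the previous Theorem exactly gives the content of Theorem~3 in~\cite{Pos93}, to which we refer for a possible choice of stable constants.'' Your proposal likewise ultimately defers to \cite{Pos93}, so you are in complete agreement with the paper's treatment; in fact you supply more, namely a faithful outline of P\"oschel's two-step scheme (resonant normal form with exponentially small remainder, followed by quasi-convex energy trapping along $\mathrm{span}(\Lambda)$ and slow drift transverse to it).

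One small remark on your final paragraph: in the theorem as stated here, $K$ is part of the data (the sub-module is a $K$-sub-module and the smallness hypothesis is assumed for that $K$), not a parameter to be optimized at the end. In P\"oschel's actual argument the exponential factor arises from the number of normal-form iterations rather than from a final choice of $K$; the hypothesis $\varepsilon|\Lambda|^2 K^{2(n-r)} \MP 1$ is what guarantees enough analyticity width to perform those iterations. This is a cosmetic point and does not affect the correctness of your sketch, since you explicitly hand off the quantitative bookkeeping to \cite{Pos93}.
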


As we recalled in the introduction, we use a dot in the various inequalities to abbreviate an assertion such as ``there exists a stable constant $c_i$ such that'', located at the beginning of the statement.

The previous Theorem exactly gives the content of Theorem 3 in~\cite{Pos93}, to which we refer for a possible choice of stable constants. Note that Pöschel uses a more quantitative version of quasi-convexity, namely that there exist two positive numbers $l$ and $m$ such that at least one of the inequalities
\[ |\nabla h(I).v|>l|v|, \quad \nabla^2 h(I)v.v \geq m|v|^2, \]
holds for any $I\in B_s$ and any $v\in\R^n$. It turns out that this notion is in fact equivalent to our condition~(\ref{mconvex}).

We shall only need Pöschel's result in the special case where the $K$-sub-module $\Lambda$ has rank $1$ and where the solution starts precisely \emph{on} the associated resonant manifold. So we let
\[ R_{K}=\bigcup_{\Lambda}R_{\Lambda} \]
where the union is taken over all (maximal) $K$-sub-modules of rank $1$, so it is the set of simply resonant frequencies of order $K$. Moreover, the ``volume" of a rank-one sub-module $|\Lambda|$ is nothing but the Euclidean norm of its (two) $\Z$--generators, so one gets the trivial estimate 
\[1 \leq |\Lambda|^2 \leq K^2\]
from which we deduce the following lemma. 
 
\begin{lemme}\label{lemmePos}
Let $\Lambda$ be a $K$-sub-module of $\Z^n$ of rank $1$. Assume that $\eps\geq 0$ and $K\geq 1$ satisfy
\begin{equation}\label{smalln1}
\varepsilon K^{2n} \MP 1
\end{equation}
and $H$ satisfies~(\ref{Hamexpo}). Then for any solution $(\theta(t),I(t))$ such that $I_0 \in B(0,R/2)$ and $\omega(0)\in R_K$ the following estimates
\[ |I(t)-I_0| \MP \left(\varepsilon K^{2}\right)^{\frac{1}{2(n-1)}}, \quad t \MP \exp\left(\cdot \varepsilon K^{2}\right)^{-\frac{1}{2(n-1)}},\]
hold true.
\end{lemme}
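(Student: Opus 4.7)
The plan is to deduce Lemma \ref{lemmePos} as an immediate specialization of Pöschel's Theorem \ref{theoremPos} to the case $r = 1$. Since $\omega(0) \in R_K = \bigcup_\Lambda R_\Lambda$, the union being over all maximal rank-one $K$-sub-modules, there exists some rank-one $K$-sub-module $\Lambda \subset \Z^n$ with $\omega(0) \in R_\Lambda$. In particular $d(\omega(0), R_\Lambda) = 0$, so the resonant closeness hypothesis $d(\omega(0), R_\Lambda) \MP \sqrt{\varepsilon}$ of Theorem \ref{theoremPos} is trivially verified.

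Next I would estimate $|\Lambda|^2$. A maximal rank-one $K$-sub-module $\Lambda$ is generated by a single primitive integer vector $k \in \Z^n$ with $|k| \leq K$; here $|k|$ denotes the $\ell^1$-norm as fixed in the conventions. The volume $|\Lambda| = \sqrt{\det{}^tMM}$ reduces to the Euclidean norm of $k$, which is bounded by its $\ell^1$-norm. Hence $1 \leq |\Lambda|^2 \leq K^2$, and the smallness assumption $\varepsilon |\Lambda|^2 K^{2(n-1)} \MP 1$ needed to apply Theorem \ref{theoremPos} follows from the hypothesis $\varepsilon K^{2n} \MP 1$ (absorbing any change of stable constant into the $\MP$ notation).

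Once Theorem \ref{theoremPos} is applied, it yields the estimates
\[ |I(t) - I_0| \MP \left(\varepsilon |\Lambda|^2\right)^{\frac{1}{2(n-1)}}, \quad t \MP \exp\left(\cdot\, \varepsilon |\Lambda|^2\right)^{-\frac{1}{2(n-1)}}. \]
Using the upper bound $|\Lambda|^2 \leq K^2$ in the action estimate directly gives $|I(t) - I_0| \MP (\varepsilon K^2)^{1/(2(n-1))}$. For the time estimate, the same inequality implies $(\varepsilon |\Lambda|^2)^{-1/(2(n-1))} \geq (\varepsilon K^2)^{-1/(2(n-1))}$, so Pöschel's exponential stability time is at least as large as $\exp((\cdot\, \varepsilon K^2)^{-1/(2(n-1))})$, which is the statement we want.

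There is essentially no obstacle: the entire content is in Theorem \ref{theoremPos}, and the only genuine observation needed is the trivial bound $|\Lambda|^2 \leq K^2$ valid in the rank-one case, together with the remark that starting exactly on the resonant manifold makes the $\sqrt{\varepsilon}$-closeness hypothesis costless. The mild subtlety worth spelling out is the monotonicity of the two $\Lambda$-dependent expressions in $|\Lambda|^2$: both the confinement radius and the reciprocal of the stability time are increasing in $|\Lambda|^2$, so replacing $|\Lambda|^2$ by its upper bound $K^2$ only weakens the conclusions in the direction we need.
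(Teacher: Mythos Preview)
Your proof is correct and follows exactly the paper's approach: the lemma is deduced from P\"oschel's Theorem~\ref{theoremPos} specialized to $r=1$, using the trivial rank-one volume bound $1\leq |\Lambda|^2\leq K^2$ and the fact that $\omega(0)\in R_\Lambda$ makes the $\sqrt{\varepsilon}$-closeness hypothesis automatic. The paper states precisely these two observations in the paragraph preceding the lemma and does not give a separate proof.
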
  

\vskip2mm

\paraga Let us now turn to our second remark, which is a simple geometric property of the integrable system based on the iso-energetic non-degeneracy. The latter condition is known to be implied by quasi-convexity, and it can be interpreted in various ways (see (\cite{Sev06}) and references therein), but for subsequent arguments we will adopt the following form. 

\begin{lemme}\label{iso-energetic}
Suppose $h$ satisfies~(\ref{mconvex}). Then the map
\[\begin{array}{ccccc}
\Psi_{h} & : & B \times \R_*^+ & \longrightarrow & \R \times (\R^n\setminus\{0\}) \\
 & & (I,\lambda) & \longmapsto & (h(I),\lambda\omega(I)).
\end{array} \] 
is a local diffeomorphism in the neighbourhood of any point $(I_0,\lambda_0)\in B \times \R_*^+$. 

In particular, there exist stable constants $\rho_0$ and $C$ such that if $\rho<\rho_0$ then $\Psi_h$ is a diffeomorphism in restriction to the ball $B((I_0,\lambda_0),\rho)\subseteq B \times \R_*^+$, whose image contains the closed ball $\overline{B}(\Psi_h(I_0),C\rho) \subseteq \R \times \R^n$.
\end{lemme}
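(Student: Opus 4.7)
The plan is to derive both statements from the inverse function theorem applied at the point $(I_0,\lambda_0)$. First I would write down the differential of $\Psi_h$ in block form:
\[
D\Psi_h(I,\lambda)=\begin{pmatrix} \nabla h(I)^{T} & 0 \\ \lambda\,\nabla^{2}h(I) & \nabla h(I) \end{pmatrix},
\]
the first row coming from the energy coordinate and the last column from differentiation in the dilation parameter $\lambda$.

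The key algebraic step is to show that this $(n+1)\times(n+1)$ matrix is invertible at every $(I,\lambda)\in B\times\R_*^+$, which is precisely the classical iso-energetic non-degeneracy and follows directly from quasi-convexity. Suppose $(v,\mu)\in\R^n\times\R$ lies in the kernel. The first row gives $\nabla h(I)\cdot v=0$, so that $v$ is orthogonal to $\nabla h(I)$. The remaining $n$ rows give $\lambda\,\nabla^{2}h(I)\,v+\mu\,\nabla h(I)=0$; taking the scalar product with $v$ and using $v\cdot\nabla h(I)=0$ yields $\lambda\, v^{T}\nabla^{2}h(I)\,v=0$. Hypothesis~(\ref{mconvex}) then forces $v=0$, and since $\nabla h(I)\neq 0$ by assumption, we obtain $\mu=0$. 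Hence $D\Psi_h(I,\lambda)$ is invertible, and the standard inverse function theorem yields the first (qualitative) claim.

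For the quantitative statement I would combine condition~(\ref{bounded}), which bounds $\nabla^{2}h$ and $\nabla^{3}h$ uniformly on $B_s$, with a uniform lower bound $|\nabla h(I)|\geq\ell>0$ on $\overline{B}\subset B_s$ (which exists by continuity and compactness, since $\nabla h$ does not vanish). Feeding the arithmetic of the previous paragraph into matrix form produces a uniform upper bound on $\|D\Psi_h(I,\lambda)^{-1}\|$ and a uniform Lipschitz constant for $D\Psi_h$ on any region where $\lambda$ stays bounded away from $0$ and $\infty$. Plugging these two constants into the quantitative version of the inverse function theorem (which guarantees a diffeomorphism on a ball whose radius and image radius are explicit functions of those constants) produces the stable constants $\rho_0$ and $C$ of the statement.

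The main, and essentially only, technical point is the uniform quantitative control of $D\Psi_h^{-1}$: the lower bound on $|\nabla h|$ requires us to stay away from the boundary of $B$, and the term $\lambda\nabla^{2}h$ forces us to restrict to a bounded range of $\lambda_0$ (and $\rho<\lambda_0$, so that the ball $B((I_0,\lambda_0),\rho)$ is contained in $B\times\R_*^+$). Both restrictions are harmless for the way this lemma is used in the sequel and can simply be absorbed into the definition of the stable constants $\rho_0$ and $C$.
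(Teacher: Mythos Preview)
Your argument is correct and matches the paper's approach almost exactly: both compute the differential of $\Psi_h$, show it is non-singular by assuming a vector $(v,\mu)$ is annihilated, using the first component to get $v\perp\nabla h(I)$, and then pairing the second component with $v$ to invoke~(\ref{mconvex}) and conclude $v=0$, hence $\mu=0$. The only difference is cosmetic---you write the differential as a block matrix while the paper writes its action on a vector---and that for the quantitative second statement the paper simply cites \cite{MS02}, lemma~3.17, whereas you sketch how the uniform bounds on $\|D\Psi_h^{-1}\|$ and on the Lipschitz constant of $D\Psi_h$ feed into a quantitative inverse function theorem; your remarks about needing $\lambda_0$ in a bounded range are pertinent and consistent with how the lemma is actually used later (with $\lambda=|\omega|^{-1}$).
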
 

The second statement is a consequence of the first one (see \cite{MS02} lemma 3.17 for quantitative estimates on the stable constants $\rho_0$ and $C$), and the first statement is well-known but we give a proof for convenience. 

\begin{proof} 
By the inverse function Theorem, it is enough to prove that $d_{(I_0,\lambda_0)}\Psi_h$ is non-singular at any point $(I_0,\lambda_0)\in  B\times \R_*^+$. Given $u\in \R$, $v\in\R^n$, we easily compute
\[ d_{(I_0,\lambda_0)}\Psi_h(v,u)=(\omega(I_0).v, u\omega(I_0)+\lambda_0\nabla^2 h(I_0)v) \]
and we need to show that this vector is non-zero if the vector $(v,u)\in\R^{n+1}$ is non-zero. If either $\omega(I_0).v \neq 0$, in which case the first component is non-zero, or $v=0$ and hence the second component is non-zero (since $u\neq 0$ and so $u\omega(I_0)\neq 0$), the statement is obvious. Otherwise, $\omega(I_0).v=0$ and $v\neq 0$, since $h$ satisfies~(\ref{mconvex}) this gives
\begin{eqnarray*}
(u\omega(I_0)+\lambda_0\nabla^2 h(I_0)v).v & = & \lambda_0\nabla^2 h(I_0)v.v \\
& \geq & \lambda_0 m|v|^2
\end{eqnarray*}
and therefore $u\omega(I_0)+\lambda_0\nabla^2 h(I_0)v\neq 0$. 
\end{proof}

\paraga We can now make one step further in the dynamical consequences of the structure of simple resonances. We consider a Hamiltonian $H$ satisfying~(\ref{Hamexpo}). We will actually focus on those simple resonances for which  the ratio of two frequencies becomes rational. The following elementary lemma will allow us to deal with this simple case.

\begin{lemme}\label{lemmeratio}
Let $I$ be a closed interval of length $l>0$ contained in $[-1,1]$. Then there exist a rational number $p/q\in I \cap \Q$ satisfying
\[ |q|+|p| < (4\sqrt 2) l^{-\frac{1}{2}}. \]
\end{lemme}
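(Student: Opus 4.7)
The plan is as follows. Since any $p/q \in I \subseteq [-1,1]$ satisfies $|p|\le|q|$, we have $|p|+|q|\le 2|q|$, so it suffices to exhibit a rational $p/q\in I$ with $|q|\le 2\sqrt{2}\,l^{-1/2}$; the factor $2$ in front will then produce the stated $4\sqrt{2}$.

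To produce such a $p/q$, I would use a Dirichlet/Farey-sequence approximation argument. Set $Q = \lceil \sqrt{2}\,l^{-1/2}\rceil$ and work with the Farey sequence $F_Q$ of order $Q$. Either $F_Q\cap I\neq\emptyset$, in which case $I$ contains a fraction with denominator $\le Q$ and we are done, or $I$ lies entirely inside an open Farey gap $(a/b, c/d)$ of $F_Q$. In this second case, the classical identity $c/d-a/b=1/(bd)$ combined with $c/d-a/b\ge l$ yields $bd\le 1/l$, while $b,d\le Q$ by definition of $F_Q$. The mediant $(a+c)/(b+d)$ of the two Farey neighbors then has denominator $b+d\le 2Q$, which is of the required order $O(l^{-1/2})$. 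If the mediant lies in $I$ we are done; otherwise $I$ is contained in one of the two sub-gaps defined by the mediant and we iterate, taking the mediant of the new gap, checking whether it belongs to $I$, and descending further if not.

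The main obstacle is controlling the denominators along this descent so that they remain $\le 2\sqrt{2}\,l^{-1/2}$, and in particular ensuring that the first mediant falling inside $I$ has denominator of this size. The naive approach --- picking $q=\lceil 1/l\rceil$ so that the interval $[q\alpha,q\beta]$ has length $\ge 1$ and thus contains an integer $p$ --- only yields $|q|=O(l^{-1})$ and misses the target by a full factor of $\sqrt{l}$. The square-root improvement is the heart of the argument and relies on the Farey-gap identity: whenever $I$ avoids $F_Q$, the product $bd$ of its Farey-neighbor denominators is bounded by $1/l$, which together with $b,d\le Q$ forces $b+d\le 2Q$. Tracking this constraint through the mediant iteration, and keeping careful record of which side of the gap $I$ sits on at each step, should yield the bound $|p|+|q|<4\sqrt{2}\,l^{-1/2}$, with the constants emerging cleanly from the choice $Q=\lceil\sqrt{2}\,l^{-1/2}\rceil$ and the reduction $|p|+|q|\le 2|q|$.
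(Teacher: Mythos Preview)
The paper's route is much shorter than yours: it applies Dirichlet's approximation theorem directly to the midpoint $x$ of $I$, taking $q$ to be the least integer exceeding $\sqrt 2\,l^{-1/2}$, so that $|x-p/q|<q^{-2}\le l/2$ and hence $p/q\in I$; then $|p/q|\le 1$ gives $|p|+|q|\le 2q<4\sqrt 2\,l^{-1/2}$. No Farey descent or iteration appears at all.

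Your mediant iteration, by contrast, has a genuine gap exactly where you flag ``the main obstacle'': the constraint $b'd'\le 1/l$ carried along the descent does \emph{not} force $b'+d'$ to stay of order $l^{-1/2}$ when one of the two denominators stays small. Take $I=[l,2l]$ with $l<1/8$. Then $F_Q\cap I=\emptyset$, since the least positive element of $F_Q$ is $1/Q\approx\sqrt{l/2}>2l$; the enclosing Farey gap is $(0/1,\,1/Q)$, and every mediant your process generates is of the form $1/(Q+k)$. The first one to land in $I$ has denominator $\lceil 1/(2l)\rceil$, which is of order $l^{-1}$, not $l^{-1/2}$. In fact \emph{every} rational in $[l,2l]$ has denominator at least $1/(2l)$, so no argument can produce a rational there with $|p|+|q|=O(l^{-1/2})$; this shows that your descent cannot be completed as hoped, and incidentally that the constant $4\sqrt 2$ in the statement is too optimistic (the paper's invocation of Dirichlet for a \emph{prescribed} $q$ suffers from the same issue). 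As the paper itself remarks immediately after the lemma, the application in Lemma~\ref{lemmenonresonant} only requires the trivial bound $|p|+|q|=O(l^{-1})$, which one gets by taking $q=\lceil 2/l\rceil$ and $p$ the nearest integer to $qx$.
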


The exponent in $l^{-\frac{1}{2}}$ comes from the use of Dirichlet's Theorem on the approximation of real numbers by rational ones, but this result is not necessary as in the sequel a trivial bound of order $l^{-1}$ would be enough. 

\begin{proof}
Let us write $I=[x-l/2,x+l/2]$ for some $x\in [-1,1]$, and let $q$ be the smallest integer larger than $\sqrt{2}l^{-\frac{1}{2}}$, that is
\[ \sqrt{2}l^{-\frac{1}{2}}\leq q < \sqrt{2}l^{-\frac{1}{2}}+1. \]
By Dirichlet's Theorem there exists an integer $p\in\Z$ such that 
\[|x-p/q|<q^{-2}.\]
Since $\sqrt{2}l^{-\frac{1}{2}}\leq q$, we have $q^{-2}\leq l/2$ and so
\[|x-p/q|<l/2\]
which means that $p/q \in I$. Moreover, as $I\subset [-1,1]$, then $|p|\leq q$ and
\[ |q|+|p| \leq 2q. \]
Recall that $q < \sqrt{2}l^{-\frac{1}{2}}+1$, but as $I\subseteq [-1,1]$ we have $l\leq 2$ so that $1\leq\sqrt{2}l^{-\frac{1}{2}}$ and therefore
\[ q < (2 \sqrt{2})l^{-\frac{1}{2}}. \]
This gives
\[ |q|+|p| < (4 \sqrt{2})l^{-\frac{1}{2}} \]
which concludes the proof.
\end{proof}

The following result is our main lemma. It essentially says that a (long) drifting orbit has to cross a simple resonance, since all other orbits are stable on the interval of time over which they are defined.

\begin{lemme}\label{lemmenonresonant} 
Consider $\eps\geq 0$ and $K\geq 1$ such that 
\begin{equation}\label{smalln2}
K^{-2} \MP 1, \quad \varepsilon K^2 \MP 1.
\end{equation}
Let $H$ be a Hamiltonian satisfying (\ref{Hamexpo}), $\tau\in\R^+\cup\{+\infty\}$ and let $(\theta(t),I(t))$ be a solution defined on $[0,\tau[$ with $I_0\in B(0,R/2)$. If
\[ \omega(t) \notin R_K, \quad 0\leq t<\tau\]
then the inequality
\[ |I(t)-I_0|\MP K^{-2}, \quad 0\leq t<\tau \]
holds true.
\end{lemme}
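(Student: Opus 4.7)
The plan is to combine three ingredients: conservation of $H$ along the flow, which keeps $h(I(t))$ close to $h(I_0)$; the hypothesis that $\omega(t)$ avoids every simple resonance of order at most $K$, which keeps the direction of $\omega(t)$ close to that of $\omega(I_0)$; and the iso-energetic local diffeomorphism $\Psi_h$ of Lemma~\ref{iso-energetic}, which recovers $I$ from the pair (energy, direction of $\omega$). The two smallness assumptions in~(\ref{smalln2}) are then precisely what one needs to match both perturbations to the size $O(K^{-2})$ and to stay inside the local chart of $\Psi_h$.

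For the energy, since $H$ is preserved along its flow, one has immediately $|h(I(t))-h(I_0)|\leq 2|f|_s\leq 2\varepsilon\MP K^{-2}$, using $\varepsilon K^{2}\MP 1$. For the direction, I fix an index $j$ realizing $|\omega_j(I_0)|=\max_i|\omega_i(I_0)|$ and, by flipping signs if necessary, assume $\omega_j(I_0)>0$. For any other index $i$ and any pair $(p,q)\in\Z^2\setminus\{0\}$ with $|p|+|q|\leq K$, the vector $qe_i-pe_j\in\Z^n$ is non-zero, has $\ell^1$-norm at most $K$, and generates a rank-one $K$-sub-module; hence $\omega(t)\notin R_K$ forces $q\omega_i(t)-p\omega_j(t)\neq 0$, i.e.\ the continuous function $g_{ij}(t):=\omega_i(t)/\omega_j(t)$ never takes the value $p/q$. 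Combining this with the contrapositive of Lemma~\ref{lemmeratio}, applied to the connected image of $g_{ij}$ on the interval of times where it stays in $[-1,1]$, one deduces $|g_{ij}(t)-g_{ij}(0)|\MP K^{-2}$ throughout $[0,\tau)$, for each $i\neq j$.

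To conclude, I set $\lambda(t):=1/\omega_j(I(t))$ and $\lambda_0:=1/\omega_j(I_0)$. Then $\lambda(t)\omega(I(t))$ has $j$-th coordinate $1$ and other coordinates equal to $g_{ij}(t)$, so the two preceding estimates yield
\[
\bigl|\Psi_h(I(t),\lambda(t))-\Psi_h(I_0,\lambda_0)\bigr|\MP K^{-2}.
\]
Under $K^{-2}\MP 1$, this distance is smaller than $C\rho_0$, where $C$ and $\rho_0$ are the stable constants provided by Lemma~\ref{iso-energetic}. Choosing $\rho\MP K^{-2}$ with $\rho<\rho_0$ and $C\rho$ larger than the above quantity, that lemma gives a local inverse of $\Psi_h$ on $B((I_0,\lambda_0),\rho)$ whose image contains $\Psi_h(I(t),\lambda(t))$; inverting yields $|(I(t),\lambda(t))-(I_0,\lambda_0)|\MP K^{-2}$, and in particular $|I(t)-I_0|\MP K^{-2}$, as desired.

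The delicate point is the bootstrap tying these three steps together: one needs $I(t)$ to stay in the iso-energetic chart in order that the chosen index $j$ remains approximately maximal and the ratios $g_{ij}$ stay in a bounded window where Lemma~\ref{lemmeratio} applies; and conversely the control on the ratios is precisely what keeps $I(t)$ in that chart via $\Psi_h$. Both directions close under the smallness conditions~(\ref{smalln2}) by a standard open/closed continuity argument on $[0,\tau)$, and this interlock is the main technical content of the proof.
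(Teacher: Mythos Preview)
Your argument is correct and uses exactly the same three ingredients as the paper: energy conservation for the $h$-component, Lemma~\ref{lemmeratio} for the frequency direction, and the local inverse of $\Psi_h$ from Lemma~\ref{iso-energetic} to recover $I$. The only organisational difference is that the paper argues by contradiction through a first exit time: it sets $\sigma(t)=(I(t),|\omega(t)|^{-1})$, lets $t^*$ be the first time $\sigma$ leaves the ball $B(\sigma(0),\rho)$ with $\rho\MP K^{-2}$, and then shows that at $t^*$ some ratio $\omega_i(t)/|\omega(t)|$ must have swept an interval of length $\geq 32K^{-2}$, producing a simple resonance via Lemma~\ref{lemmeratio}. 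This packaging automatically keeps the curve inside the chart of $\Psi_h$ up to time $t^*$, so no separate bootstrap is needed. Your direct approach with a fixed index $j$ and an open/closed continuity argument is equivalent; your last paragraph is precisely the content that the first-exit-time trick absorbs in the paper's version. One small point: your normalisation by the fixed component $\omega_j(t)$ rather than by $|\omega(t)|$ is harmless because the non-resonance hypothesis forbids $g_{ij}(t)=\pm1$, so the ratios stay in $(-1,1)$ and Lemma~\ref{lemmeratio} applies throughout, exactly as you indicate.
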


Once again, the exponent in $K^{-2}$ comes from lemma~\ref{lemmeratio} and hence from Dirichlet's Theorem, but a bound of order $K^{-1}$ would be sufficient for the final result. Let us also add the if $\tau$ is the maximal time of existence of the solution within the initial domain $\T^n\times B$, then our stability estimate easily ensures that $\tau=+\infty$ which in turn implies stability for all time (see the proof of Theorem~\ref{exposantNek}). 

\begin{proof}
We will make conditions~(\ref{smalln2}) explicit and prove that 
\[ |I(t)-I_0|< 32\:C^{-1}K^{-2}, \quad 0\leq t<\tau, \]
when
\begin{equation*}
K^{-2} < (32)^{-1}C\rho_0, \quad \varepsilon K^2 <16,
\end{equation*}
where $\rho_0$ and $C$ are the stable constants of lemma~\ref{iso-energetic}.

We will argue by contradiction, so we assume that there exists a time $\tilde{t}$ for which
\[ |I(\tilde{t})-I_0|\geq 32\:C^{-1}K^{-2}.\]
Consider the curve
\[ \sigma(t)=(I(t),|\omega(t)|^{-1})\in B\times \R^+ \]
and let $\rho=32\:C^{-1}K^{-2}$. Then 
\[ t^*=\inf\{t\in[0,\tau[ \; | \; \sigma(t) \notin B(\sigma(0),\rho) \} \]
is well-defined as the above set contains $\tilde{t}$. Now $\rho<\rho_0$ so we can apply lemma~\ref{iso-energetic} : the restriction of $\Psi_h$ to the open ball $B(\sigma(0),\rho)$ is a diffeomorphism whose image contains the closed ball $\overline{B}(\Psi_h(\sigma(0)),32\:K^{-2})$. Considering a slightly larger ball over which $\Psi_h$ remains a diffeomorphism, this easily implies that
\[ \Psi_h(\sigma(t^*))\notin B(\Psi_h(\sigma(0)),32\:K^{-2})\]
that is
\[ \left|\left(h(I(t^*)),|\omega(t^*)|^{-1}\omega(t^*)\right)-\left(h(I_0),|\omega(0)|^{-1}\omega(0)\right)\right|\geq 32\:K^{-2}.\]
Using the preservation of energy one has 
\[ |h(I(t^*))-h(I_0)| < 2\varepsilon < 32\:K^{-2} \]
so that necessarily
\[ \left||\omega(t^*)|^{-1}\omega(t^*)-|\omega(0)|^{-1}\omega(0)\right|\geq 32\:K^{-2}.\]
Therefore there exists an index $i\in\{1,\dots,n\}$ such that 
\[\left|\frac{\omega_i(t^*)}{|\omega(t^*)|}-\frac{\omega_i(0)}{|\omega(0)|}\right|\geq 32\:K^{-2}\]
and this estimate means that the image of the interval $[0,t^*]$ under the continuous function
\[ t \longmapsto \frac{\omega_i(t)}{|\omega(t)|} \in [-1,1]\]
contains a non-trivial interval $I$ of length $l=32\:K^{-2}$. Now we can apply lemma~\ref{lemmeratio} to find a rational number $p/q \in \Q$ in reduced form and a time $t'\in[0,t^*]$ such that
\begin{equation}\label{eqrat}
\frac{\omega_i(t')}{|\omega(t')|}=\frac{p}{q}
\end{equation}
with
\begin{equation}\label{estimK}
|p|+|q| < 4\sqrt{2}(32\:K^{-2})^{-\demi}=K.
\end{equation}
But as $|\omega(t')|=|\omega_j(t')|$ for some $j\in\{1,\dots,n\}$, and replacing $p$ with $-p$ if $\omega_j(t')$ is negative, the equality~(\ref{eqrat}) can be written as
\begin{equation}\label{eqrat2}
q\omega_i(t')-p\omega_j(t')=0. 
\end{equation}
Now let us write $k'=qe_i-pe_j\in\Z$, then from~(\ref{estimK}) and~(\ref{eqrat2}) we have
\[k'.\omega(t')=0, \quad |k'|<K,\] 
and since the sub-module generated by $k'$ is maximal as $p$ and $q$ are co-primes, we have found that $\omega(t')\in R_K$. This gives the desired contradiction. 
\end{proof}

\paraga We can finally pass to the proof of Theorem~\ref{exposantNek}.

\begin{proof}[Proof of Theorem~\ref{exposantNek}]
We choose $K$ of the form
\[ K=K_0\left(\frac{\varepsilon_0}{\varepsilon}\right)^{\gamma} \]
with suitable stable constants $K_0$ and $\varepsilon_0$ so that conditions~(\ref{smalln1}) and~(\ref{smalln2}) are satisfied if
\[ 0\leq\varepsilon\leq\varepsilon_0, \quad 0 < \gamma \leq (2n)^{-1}. \]

With this threshold and these bounds on $\gamma$ both lemma~\ref{lemmePos} and lemma~\ref{lemmenonresonant} can be applied. Let $(\theta_0,I_0)\in\T^n\times B(0,R/2)$ and $T$ the maximal time of existence within $\T^n\times B(0,R)$ of the solution $(\theta(t),I(t))$ starting at $(\theta_0,I_0)$. We have to distinguish between two cases.

In the first case, we assume that $\omega(t)\in N_K$ for all $t<T$. Then we apply lemma~\ref{lemmenonresonant} with $\tau=T$ to get
\[ |I(t)-I_0|\MP\varepsilon^{2\gamma}, \quad 0 \leq t<T. \]
From this estimate we can deduce that the solution $I(t)$ belongs to some compact ball around $I_0$ which, taking $\varepsilon_0$ small enough (this is possible since $\gamma>0$), is included in $B(0,R)$. Therefore this solution is defined for all time, that is $T=+\infty$, and so the previous estimate gives
\[ |I(t)-I_0|\MP\varepsilon^{2\gamma}, \quad 0 \leq t<+\infty. \] 

In the second case, there exists a smallest time $0\leq t^*<T$ such that $\omega(t^*)$ belongs to the set $R_K$. We apply once again lemma~\ref{lemmenonresonant} with $\tau=t^*$ to have 
\[ |I(t)-I_0|\MP \varepsilon^{2\gamma}, \quad 0\leq t\leq t^*. \]
Again, taking $\varepsilon_0$ small enough we can ensure that $I(t^*)\in B(0,R/2)$, then we can apply lemma~\ref{lemmePos} to the solution $I_{t^*}(t)=I(t+t^*)$, which initial frequency belongs to $R_K$, to obtain 
\[ |I_{t^*}(t)-I_{t^*}(0)| \MP \left(\varepsilon K^{2}\right)^{\frac{1}{2(n-1)}}, \quad 0\leq t \MP \exp\left(\cdot \varepsilon K^{2}\right)^{-\frac{1}{2(n-1)}}. \]
Setting
\[ a_\gamma=\frac{1-2\gamma}{2(n-1)} \]
this gives
\[ |I_{t^*}(t)-I_{t^*}(0)| \MP \varepsilon^{a_\gamma}, \quad 0 \leq t \MP \exp(\cdot \varepsilon^{-a_\gamma}). \]
Since $t^*\geq 0$, we get in particular 
\[ |I(t)-I(t^*)| \MP \varepsilon^{a_\gamma}, \quad t^* \leq t \MP \exp(\cdot \varepsilon^{-a_\gamma}), \]
and we conclude that
\[ |I(t)-I_0| \MP (\varepsilon^{2\gamma}+\varepsilon^{a_\gamma}) \MP\max\left\{\varepsilon^{2\gamma},\varepsilon^{a_\gamma}\right\}, \quad 0\leq t \MP \exp(\cdot \varepsilon^{-a_\gamma}). \]
Now, using the condition $0<\gamma \leq (2n)^{-1}$ one has
\[ (2n)^{-1} \leq a_\gamma < (2(n-1))^{-1} \]
in particular
\[ \gamma \leq a_\gamma \]
and hence
\[  \max\left\{\varepsilon^{2\gamma},\varepsilon^{a_\gamma}\right\}\leq \varepsilon^{\gamma}.  \]
Therefore for all solutions, the estimates
\[ |I(t)-I_0| \MP \varepsilon^{\gamma}, \quad 0\leq t \MP \exp(\cdot \varepsilon^{-a_\gamma}), \]
hold true. 

Finally, to obtain our statement just set
\[ \delta=\gamma(n-1)^{-1} \]
so that
\[ a_\gamma=(2(n-1))^{-1}-\delta \]
hence
\[ |I(t)-I_0| \MP \varepsilon^{\delta(n-1)}, \quad 0 \leq t \MP \exp\left(\cdot \varepsilon^{-(2(n-1))^{-1}+\delta}\right) \]
all this provided that
\[ 0<\delta \leq (2n(n-1))^{-1}.\]
This ends the proof of the first part of the statement. For the second part, choosing $K$ in terms of $\rho$ but independent of $\varepsilon$, the proof is similar and even simpler, so we do not repeat the details. 
\end{proof}

\section{The Gevrey case}

In this section, we will prove Theorem~\ref{exposantNekG}. In fact, it will be enough to have a version of lemma~\ref{lemmePos} in the Gevrey case, as the geometric considerations of the previous section still apply with no changes. 

Here we shall use a result from~\cite{MS02}, which follows the method introduced of Lochak (\cite{Loc92}) from which results of improved stability near resonances actually originates. In the latter approach, the notion of ``order" of a resonance is more intrinsic, however it is also more difficult to compute. 
 
Let $\Lambda$ be a sub-module of $\Z^n$ of rank $r$, with $r\in\{1,\dots,n\}$, and choose  a basis $\{k^1, \dots,k^r\} \in \Z^n$ for $\Lambda$. We define the matrix $L$ of size $r\times n$ with integer entries whose rows are given by the vectors $k^i=(k_1^i, \dots, k_n^i)$, $1\leq i\leq r$, that is
\[
L=\begin{pmatrix}
k_1^1 & \cdots & k^1_n \\
\vdots & & \vdots \\         
k^r_1 & \cdots & k^r_n
\end{pmatrix} \in M_{r,n}(\Z).
\] 
Then it is an elementary result of linear algebra that there exists integers $d_1, \dots, d_r \in \Z$, satisfying the divisibility conditions $d_1|\dots|d_r$, such that $L$ is equivalent to the diagonal matrix
\[
\Delta=\begin{pmatrix}
d_1 & & 0 & 0 & \cdots & 0  \\
 & \ddots & & & \vdots  \\         
0 &  & d_r & 0 & \cdots & 0 
\end{pmatrix} \in M_{r,n}(\Z).
\]
Therefore one can write 
\begin{equation} \label{clambda}
L=B\Delta A
\end{equation}
for some matrices $A\in GL(n,\Z)$ and $B\in GL(r,\Z)$. 

The numbers $d_i$ are called the invariant factors of the module, and for a maximal module one can show that these numbers are all equal to one. The above normal form result can be proved equivalently by elementary operations on rows and columns or using the structure of finitely generated modules over a principal domain. 

One can easily check that $^t\!A$ sends the  standard sub-module (which is the one generated by the first $r$ vectors of the canonical basis of $\Z^n$) to the sub-module $\Lambda$. So quantitative informations about the sub-module are encoded in those matrices $A\in GL(n,\Z)$.

Following Lochak, we define $c_\Lambda$ (resp. $c_\Lambda'$) as the minimal value of the norm $|A^{-1}|$ (resp. of $|A|$) among all matrices $A\in GL(n,\Z)$ satisfying the relation~(\ref{clambda}) (it is easy to see that those constants depend only on $\Lambda$ and not on the choice of such a matrix). In the space $M_n(\Z)$ we may choose the norm $|\,.\,|$ induced by the usual supremum norm for vectors, which is nothing but the maximum of the sums of the absolute values of the elements in each row.

With those definitions, one can state the following stability result  in the Gevrey class.

\begin{theoreme}[Marco-Sauzin]\label{theoremSau}
Let $\Lambda$ be a $K$-sub-module of $\Z^n$ of rank $r$, with $r\in\{0,\dots,n-1\}$.
Assume that $\varepsilon\geq 0$ satisfies
\begin{equation*}
\varepsilon c_\Lambda^{5(n-r)} \MP 1, \quad \varepsilon c_\Lambda^{3(n-r)} c_\Lambda '^{2(n-r)} \MP 1,
\end{equation*}
and $H$ satisfies~(\ref{HamexpoG}). Then for any solution $(\theta(t),I(t))$ with $I_0\in B(0,R/2)$ and $d(\omega(0),R_\Lambda) \MP \sqrt \varepsilon$ the following estimates
\[ |I(t)-I_0| \MP c_\Lambda^{3/2}c_\Lambda' \varepsilon^{\frac{1}{2(n-r)}}, \quad t \MP \exp\left(\cdot c_\Lambda^{5(n-r)} \varepsilon \right)^{-\frac{1}{2\alpha(n-r)}}\]
hold true.
\end{theoreme}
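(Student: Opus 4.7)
The plan is to adapt Lochak's periodic-orbit method to the Gevrey category along the lines of Marco--Sauzin. The proof has three essential steps: a linear symplectic straightening of $\Lambda$, a Gevrey resonant normal form applied to the transformed system, and a confinement argument based on quasi-convexity together with energy conservation.

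First I would use the matrix $A\in GL(n,\Z)$ appearing in the decomposition $L=B\Delta A$ to define the linear symplectic map $\Phi_A(\theta,I) = ({}^tA^{-1}\theta,\ AI)$ on $\T^n\times\R^n$. By construction, $\Phi_A$ sends $R_\Lambda$ to the standard rank-$r$ resonance $R_0=\{\omega'_1=\cdots=\omega'_r=0\}$. In the new variables the transformed Hamiltonian $\tilde H = H\circ\Phi_A^{-1}$ is still Gevrey, with norm scaled by a positive power of $|A|\leq c_\Lambda'$, while $\tilde h$ remains quasi-convex with constants distorted by factors depending on $c_\Lambda=\min|A^{-1}|$. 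The hypothesis $d(\omega(0),R_\Lambda)\MP\sqrt\varepsilon$ translates into $d(\tilde\omega(0),R_0)\MP c_\Lambda\sqrt\varepsilon$.

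Next I would apply the Gevrey resonant normal form of Marco--Sauzin on a tubular neighbourhood of $R_0$ of action-width of order $c_\Lambda^{3/2}c_\Lambda'\varepsilon^{1/(2(n-r))}$ and angular width adapted to the Gevrey regularity. Iterative averaging yields a Gevrey symplectic change of variables $\Psi$ bringing $\tilde H$ to the form
\[ \tilde H\circ\Psi = \tilde h(I') + g(\theta'_1,\ldots,\theta'_r,I') + R(\theta',I'), \]
with $|g|\MP\varepsilon$ depending only on the resonant angles, and a remainder $R$ of size $\exp\bigl(-(\cdot\,c_\Lambda^{5(n-r)}\varepsilon)^{-1/(2\alpha(n-r))}\bigr)$. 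The two smallness conditions of the statement play distinct roles: $\varepsilon c_\Lambda^{5(n-r)}\MP 1$ is what the averaging iteration requires to converge on the prescribed domain (the exponent $5(n-r)$ combining the Gevrey loss per step with the polynomial factor in the cutoff), while $\varepsilon c_\Lambda^{3(n-r)}c_\Lambda'^{2(n-r)}\MP 1$ controls the Gevrey norm distortion introduced by $\Phi_A^{-1}$ at the very first step.

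Confinement then follows from a standard two-part argument. Along the true flow, $(I'_{r+1},\ldots,I'_n)$ are approximate integrals of $\tilde h+g$, so they drift by at most $|R|\cdot t$, which is dominated by the announced bound for times up to $\exp\bigl((\cdot\,c_\Lambda^{5(n-r)}\varepsilon)^{-1/(2\alpha(n-r))}\bigr)$. For the resonant components $(I'_1,\ldots,I'_r)$, quasi-convexity of $\tilde h$ combined with conservation of $\tilde H\circ\Psi$ confines the orbit to a slice of $\{\tilde h=\mathrm{cst}\}\cap\{I'_{r+1}=\mathrm{cst},\ldots,I'_n=\mathrm{cst}\}$ whose diameter is controlled by $c_\Lambda^{3/2}\varepsilon^{1/(2(n-r))}$ in the $I'$-variables; passing back through $\Phi_A$ multiplies this bound by $|A|\leq c_\Lambda'$, giving the announced displacement. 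The main technical obstacle is the precise tracking of $c_\Lambda$ and $c_\Lambda'$ through the normal-form iteration, since these two constants encode independent pieces of information (the size of small divisors and the Gevrey-norm distortion, respectively) and enter the final estimates with distinct exponents that cannot be combined. This careful bookkeeping is the substantive analytic content of MS02, from which the theorem is essentially imported.
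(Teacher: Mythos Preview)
The paper does not prove this theorem at all: it is quoted verbatim as the addendum to Theorem~A of \cite{MS02}, with the single sentence ``This is exactly the addendum to Theorem A in \cite{MS02}, to which we refer for a possible choice of stable constants.'' There is therefore no ``paper's own proof'' to compare against; the result is imported as a black box.

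Your sketch is a reasonable summary of the Lochak--Marco--Sauzin strategy (linear straightening of the resonance via $A\in GL(n,\Z)$, Gevrey resonant normal form, quasi-convex confinement), and you yourself note at the end that the substantive analytic content lives in \cite{MS02}. So your proposal and the paper are aligned in spirit: both defer the actual work to the cited source. The only caveat is that what you wrote is a plausibility outline, not a proof---the precise emergence of the exponents $5(n-r)$, $3(n-r)$, $2(n-r)$, and $3/2$ from the normal-form iteration requires the detailed bookkeeping you allude to but do not carry out. For the purposes of this paper that is exactly the right level: the theorem is used, not re-proved.
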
  

This is exactly the addendum to Theorem A in \cite{MS02}, to which we refer for a possible choice of stable constants. Note also that in \cite{MS02} the names of these constants are a bit different: there $c_\Lambda''$ stands for what we have called $c_\Lambda$, and another constant called $c_\Lambda$ is introduced which is obviously equivalent to ours.

It will be sufficient to restrict ourselves to the case where the sub-module is of rank one. Even in this case,
the constants $c_\Lambda$ and $c_\Lambda'$ are in general very difficult to compute, so we will use only the obvious estimate 
\[ c_\Lambda \leq |A^{-1}|, \quad c_\Lambda' \leq |A|, \]
for some suitable matrix $A$ satisfying~(\ref{clambda}) for a given $K$-sub-module $\Lambda$.

\begin{proposition}\label{propEucl2}
Let $\Lambda$ be a maximal $K$-sub-module of rank $1$. Then we have the estimate
\[c_\Lambda \leq n!K^{n-1}, \quad c_\Lambda' \leq K.  \]
\end{proposition}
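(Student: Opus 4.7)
The plan is to explicitly construct a convenient matrix $A$ realizing the decomposition~(\ref{clambda}), and then bound $|A|$ and $|A^{-1}|$ separately.

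Since $\Lambda$ has rank one and is maximal, it is generated by a primitive vector $k = (k_1,\ldots,k_n) \in \Z^n$ (that is, $\gcd(k_1,\ldots,k_n)=1$), and because $\Lambda$ is a $K$-sub-module we may take $|k| \leq K$. The unique invariant factor is then $d_1 = 1$, so the relation $L = B\Delta A$ reduces to $L = \pm (1,0,\ldots,0)\,A$ with $A \in GL(n,\Z)$ whose first row is $\pm k$. Bounding $c_\Lambda'$ and $c_\Lambda$ therefore amounts to constructing such an $A$ with controlled $|A|$ and $|A^{-1}|$, and invoking the trivial estimates $c_\Lambda \leq |A^{-1}|$ and $c_\Lambda' \leq |A|$ recalled just before the statement.

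For the bound $c_\Lambda' \leq K$, I would produce an $A$ whose remaining $n-1$ rows all have $\ell^1$--norm at most $K$; this is exactly the question of completing the primitive vector $k$ to a $\Z$--basis of $\Z^n$ whose other vectors are $\ell^1$--bounded by $|k|$. The natural approach is induction on $n$: if some coordinate $k_i$ equals $\pm 1$, the standard vectors $e_j$, $j \neq i$, serve as a completion; otherwise one applies Bezout to $\gcd(k_1,\ldots,k_{n-1})$ and $k_n$ to peel off one coordinate, reducing to an $(n-1)$--dimensional completion of the primitive vector $(k_1/d,\ldots,k_{n-1}/d)$ together with one additional basis vector built from the Bezout coefficients. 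Granted such an $A$, one has $c_\Lambda' \leq |A| \leq K$ immediately.

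For $c_\Lambda \leq n!\,K^{n-1}$, I would use the adjugate formula: since $\det A = \pm 1$, each entry of $A^{-1}$ is, up to sign, an $(n-1)\times(n-1)$ minor of $A$. Every entry of $A$ has absolute value bounded by the row sum $|A|\leq K$, and the Leibniz expansion of such a minor consists of $(n-1)!$ signed products of $n-1$ entries of $A$, so each entry of $A^{-1}$ is bounded in absolute value by $(n-1)!\,K^{n-1}$. Summing over the $n$ columns in any fixed row of $A^{-1}$ gives
\[ |A^{-1}| = \max_i \sum_j |(A^{-1})_{ij}| \leq n \cdot (n-1)!\,K^{n-1} = n!\,K^{n-1}, \]
hence $c_\Lambda \leq n!\,K^{n-1}$.

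The only non-routine ingredient is the explicit small--norm completion of $k$ to a $\Z$--basis of $\Z^n$; the inductive Bezout step must be carried out carefully to certify that the newly introduced basis vector still has $\ell^1$--norm at most $K$. The bound on $|A^{-1}|$ is then a direct application of the cofactor formula combined with Leibniz's determinant expansion, and involves no further combinatorics.
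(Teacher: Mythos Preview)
Your proposal is correct and follows essentially the same route as the paper: reduce to constructing $A\in GL(n,\Z)$ with first row $k$ and $|A|\leq K$ via an inductive B\'ezout completion, then read off $c_\Lambda'\leq K$ directly and obtain $c_\Lambda\leq n!\,K^{n-1}$ from the cofactor formula together with the Leibniz expansion of the $(n-1)\times(n-1)$ minors. The only point the paper makes fully explicit that you leave as a caveat is the choice of small B\'ezout coefficients (for $ud+vk_n=1$ one takes $|u|\leq|k_n|$, $|v|\leq d$), which is precisely what guarantees that the newly adjoined row has $\ell^1$--norm at most $K$.
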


Let us point out that these estimates are very rough, however it seems difficult to improve the exponent in $K$.

\begin{proof}
Let $k=(k_1,\dots,k_n)\in\Z^n\setminus\{0\}$ be a generating vector of $\Lambda$, with $|k|\leq K$. Since $\Lambda$ is maximal,  the components of $k$ are relatively prime and the invariant factor of $\Lambda$ is equal to one, so
\[
B=(1),\qquad
\Lambda=(1\ 0\ldots 0),
\]
and a matrix $A\in GL(n,\Z)$ satisfies $L=B\Delta A$ as in~(\ref{clambda}) if and only if its first row is equal to $k$. 

\paraga To prove our estimates, it will be enough to show that one can choose $A$ such that the $\ell^1$--norm of each of its rows is bounded by $K$, that is $|A|\leq K$. Indeed, assuming the existence of such a matrix $A$, one immediately gets
\[ c_\Lambda' \leq |A| \leq K. \]
Moreover, since the determinant of $A$ is $\pm 1$, $A^{-1}$ is a matrix of cofactors, therefore the absolute value of each of its element is trivially bounded by $(n-1)!K^{n-1}$, which gives
\[ c_\Lambda \leq |A^{-1}| \leq n!K^{n-1}. \]
So it remains to prove that one can construct such a matrix, and we will do this by induction on $n\geq 1$.

\paraga Let us state first an elementary remark. 
Let $x\in \Z^2 \setminus \{0\}$ and $y\in \Z^2 \setminus \{0\}$, with $d={\rm gcd}(x,y)$. Then there exist $u,v\in \Z$ satisfying the equation
\[ ux+vy=d \]
with the estimates 
\[ |u|\leq \frac{|y|}{d}, \quad |v|\leq \frac{|x|}{d}.\] 

To see this, note that the existence of at least one solution $u_0,v_0$ follows easily from euclidean division algorithm. Then obviously
\[ u=u_0-k\frac{y}{d}, \quad v=v_0-k\frac{x}{d}, \]
is also a solution for any $k\in\Z$. Therefore choosing $k$ properly we can find at least one solution $u,v$ with
\[ |u|\leq \frac{|y|}{d}-1.  \]
For this solution one has
\begin{eqnarray*}
|v||y| & = & |d-ux| \\
& \leq & d+|ux| \\
& \leq & d+(\pfrac{|y|}{d}-1)|x| \\
& \leq & \pfrac{|y|}{d}|x|+d-|x|\\
& \leq & \pfrac{|y|}{d}|x|
\end{eqnarray*}
since $d-|x|\leq 0$, so  
\[|v|\leq \frac{|x|}{d}\] 
which proves our claim. Finally, if for instance $y=0$, then $d=x$ and one can obviously choose $u=1$ and $v=0$.

\paraga Let $K\geq 1$ be given. We can now state our induction hypothesis {\bf H(n)} for  $n\geq 1$. 

\vskip2mm

{\bf H(n).} {\em Let $k=(k_1,\ldots, k_n)$ be vector of $\Z^n\setminus \{0\}$ with co-prime components such that $|k|\leq K$. Then there exists a matrix $A\in GL(n,\Z)$ with first row equal to $k$, which satisfies $|A|\leq K$.}

\vskip2mm

The assertion {\bf H(1)} is immediate since in this case $k=(\pm 1)$. 
Now for $n\geq 2$, assume that  {\bf H(n-1)} holds true and consider $k=(k_1,\ldots, k_n)$ in $\Z^n\setminus \{0\}$ with co-prime components and $|k|\leq K$.

We may suppose that $k_*=(k_1,\dots,k_{n-1})$ is non-zero (otherwise we consider $k_*=(k_2, \dots, k_n)$) and we set $d={\rm gcd}(k_1,\dots,k_{n-1})$. So $d\geq1$, the integers $d^{-1}k_1,\dots,d^{-1}k_{n-1}$ are co-prime and
\[|k_*|\leq \frac{K}{d}.\]

By {\bf H(n-1)}  we can find a matrix 
\[
\begin{pmatrix}
d^{-1}k_1 & \cdots & d^{-1}k_{n-1} \\
l_{2,1} & \cdots & l_{2,n-1}  \\
\vdots & & \vdots \\         
l_{n-1,1} & \cdots & l_{n-1,n-1}
\end{pmatrix} \in GL(n-1,\Z),
\] 
such that 
\[\abs{\sum_{j=1}^{n-1} l_{i,j}} \leq K\]
 for each $i\in\{2,\dots,n-1\}$.
 
Now since $d$ and $k_n$ are co-prime, one can find integers $u,v\in \Z$ such that 
\[ ud+vk_n=1 \] 
and therefore define a matrix
\[
A(u,v)=\begin{pmatrix}
k_1 & \cdots & k_{n-1} & k_{n} \\
l_{2,1} & \cdots & l_{2,n-1} & 0  \\
\vdots & & \vdots & \vdots \\         
l_{n-1,1} & \cdots & l_{n-1,n-1} & 0 \\
(-1)^{n-1}vd^{-1}k_1 & \cdots & (-1)^{n-1}vd^{-1}k_{n-1} & (-1)^{n-1}u
\end{pmatrix}.
\] 
Expanding the determinant relatively to the last column easily proves that $A(u,v)\in GL(n,\Z)$. 

As for the estimates, first assume that $k_n=0$. Then $d=1$ and we may choose $u=1$ and $v=0$, so obviously
\[\abs{A(1,0)}\leq K.\]
If now $k_n$ is non-zero, then by our previous remark  we can choose $(u_*,v_*)$ so that
\[ |u_*|\leq |k_n|, \quad |v_*|\leq d,\]
which proves that the $\ell^1$--norm of the last row is bounded by $K$, and therefore  $\abs{A(u_*,v_*)}\leq K$.
This ends the proof.  
\end{proof}

With these estimates, one can deduce from Theorem~\ref{theoremSau} the following lemma.

\begin{lemme}\label{lemmeSau}
Let $\Lambda$ be a $K$-sub-module of $\Z^n$ of rank $1$. Assume that $\eps\geq 0$ and $K\geq 1$ satisfy
\begin{equation} \label{smalln1'}
\varepsilon K^{5(n-1)^2} \MP 1
\end{equation}
and $H$ satisfies (\ref{HamexpoG}). Then for any solution $(\theta(t),I(t))$ such that $I_0 \in B(0,R/2)$ and $\omega(0)\in R_K$ the following estimates
\[ |I(t)-I_0| \MP (\varepsilon K^{(n-1)(3n-1)})^{\frac{1}{2(n-1)}}, \quad t \MP \exp\left(\cdot \varepsilon K^{5(n-1)^2}\right)^{-\frac{1}{2\alpha(n-1)}}, \]
hold true.
\end{lemme}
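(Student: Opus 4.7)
The plan is to apply Theorem~\ref{theoremSau} with $r=1$ to the maximal rank-one $K$-sub-module $\Lambda$, and to translate the constants $c_\Lambda$, $c_\Lambda'$ appearing there into powers of $K$ via Proposition~\ref{propEucl2}. No new dynamical input is required.

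First I would observe that since $\omega(0)\in R_K$, by definition there is a maximal rank-one $K$-sub-module $\Lambda$ with $\omega(0)\in R_\Lambda$, so that $d(\omega(0),R_\Lambda)=0$ and the frequency-distance hypothesis $d(\omega(0),R_\Lambda)\MP \sqrt\varepsilon$ of Theorem~\ref{theoremSau} is trivially satisfied. Next I would verify the two smallness hypotheses of Theorem~\ref{theoremSau}. With $r=1$ and the bounds $c_\Lambda \leq n!\,K^{n-1}$, $c_\Lambda' \leq K$ from Proposition~\ref{propEucl2}, I get
\[
\varepsilon c_\Lambda^{5(n-1)} \leq (n!)^{5(n-1)}\,\varepsilon K^{5(n-1)^2},
\]
\[
\varepsilon c_\Lambda^{3(n-1)} c_\Lambda'^{\,2(n-1)} \leq (n!)^{3(n-1)}\,\varepsilon K^{3(n-1)^2+2(n-1)} = (n!)^{3(n-1)}\,\varepsilon K^{(n-1)(3n-1)}.
\]
Since $(n-1)(3n-1) \leq 5(n-1)^2$ for $n\geq 2$ (the inequality reduces to $n\geq 2$) and $K\geq 1$, both right-hand sides are controlled by a constant times $\varepsilon K^{5(n-1)^2}$. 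Hence the single assumption~(\ref{smalln1'}), with the stable constant chosen small enough, implies both hypotheses of Theorem~\ref{theoremSau}.

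It then suffices to read off the conclusions of Theorem~\ref{theoremSau}. For the radius of confinement,
\[
|I(t)-I_0| \MP c_\Lambda^{3/2} c_\Lambda' \,\varepsilon^{\frac{1}{2(n-1)}} \leq (n!)^{3/2}\, K^{(3n-1)/2}\,\varepsilon^{\frac{1}{2(n-1)}} = (n!)^{3/2}\bigl(K^{(n-1)(3n-1)}\varepsilon\bigr)^{\frac{1}{2(n-1)}},
\]
where I used $3(n-1)/2 + 1 = (3n-1)/2$ and then regrouped. Absorbing $(n!)^{3/2}$ into the stable constant delivers the first claimed estimate. For the time, Theorem~\ref{theoremSau} guarantees stability up to an exponential of $(c_\Lambda^{5(n-1)}\varepsilon)^{-1/(2\alpha(n-1))}$; since $c_\Lambda^{5(n-1)}\varepsilon \leq (n!)^{5(n-1)} K^{5(n-1)^2}\varepsilon$, this exponent is at least $(n!)^{-5/(2\alpha)}\bigl(K^{5(n-1)^2}\varepsilon\bigr)^{-1/(2\alpha(n-1))}$, and the numerical factor can again be absorbed into a new stable constant, which yields the second claim.

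There is no genuine obstacle here: the argument is purely a bookkeeping combination of Proposition~\ref{propEucl2} with Theorem~\ref{theoremSau}. The only minor point worth emphasizing—and the reason the single smallness condition~(\ref{smalln1'}) is enough—is the elementary inequality $(n-1)(3n-1) \leq 5(n-1)^2$ for $n\geq 2$, which ensures that the stronger of the two Marco--Sauzin hypotheses is automatic once~(\ref{smalln1'}) holds.
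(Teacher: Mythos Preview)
Your proposal is correct and follows exactly the approach the paper intends: the paper does not spell out a proof but simply says ``With these estimates, one can deduce from Theorem~\ref{theoremSau} the following lemma,'' and your argument is precisely the bookkeeping combination of Proposition~\ref{propEucl2} with Theorem~\ref{theoremSau} that this sentence points to. Your computations (including the regrouping $K^{(3n-1)/2}\varepsilon^{1/(2(n-1))}=(\varepsilon K^{(n-1)(3n-1)})^{1/(2(n-1))}$ and the observation $(n-1)(3n-1)\leq 5(n-1)^2$ for $n\geq 2$) are all correct.
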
  

The proof of Theorem~\ref{exposantNekG} is now completely similar to the proof of Theorem~\ref{exposantNek}, but we shall repeat some details.

\begin{proof}[Proof of Theorem~\ref{exposantNekG}]
We choose $K$ of the form
\[ K=K_0\left(\frac{\varepsilon_0}{\varepsilon}\right)^{\gamma} \]
with suitable stable constants $K_0$ and $\varepsilon_0$ so that conditions~(\ref{smalln2}) and~(\ref{smalln1'}) are satisfied if
\[ 0\leq\varepsilon\leq\varepsilon_0, \quad 0<\gamma \leq 5^{-1}(n-1)^{-2}. \]

Then we can apply both lemma~\ref{lemmePos} and lemma~\ref{lemmenonresonant} in the same way as in the proof of Theorem~\ref{exposantNekG}, and setting
\[ a_\gamma=\frac{1-5\gamma(n-1)^{2}}{2\alpha(n-1)} ,\quad b_\gamma=\frac{1-\gamma(n-1)(3n-1)}{2(n-1)}, \]
we find that all solutions $(\theta(t),I(t))$ starting at $(\theta_0,I_0)$, with $I_0\in B(0,R/2)$, satisfy 
\[ |I(t)-I_0| \MP \max\left\{\varepsilon^{2\gamma},\varepsilon^{b_\gamma}\right\}, \quad t \MP \exp(\cdot \varepsilon^{-a_\gamma}). \]
Next using our condition
\[ 0 < \gamma \leq 5^{-1}(n-1)^{-2} \]
we have the bounds
\[ 5^{-1}(n-2)(n-1)^{-2} \leq b_\gamma \leq (2(n-1))^{-1} \]
hence $\gamma \leq b_\gamma$ so that
\[ |I(t)-I_0| \MP \varepsilon^\gamma, \quad t \MP \exp(\cdot \varepsilon^{-a_\gamma}). \]
To conclude, just set
\[ \delta=\frac{5}{2}\gamma(n-1) \]
so that
\[ a_\gamma=(2\alpha(n-1))^{-1}-\delta \]
hence
\[ |I(t)-I_0| \MP \varepsilon^{\frac{2}{5}\delta(n-1)^{-1}}, \quad |t| \MP \exp\left(\cdot \varepsilon^{-(2\alpha(n-1))^{-1}+\delta}\right) \]
all this provided that
\[ 0<\delta \leq (2(n-1))^{-1}\]
from which we will only retain
\[ 0<\delta \leq (2\alpha n(n-1))^{-1}.\]
This concludes the proof of the first part, and the arguments for the second part are analogous choosing $K$ that depends on $\rho$ but independent of $\varepsilon$.   
\end{proof}

\noindent{\bf Acknowledgments.} 
The authors thank Pierre Lochak for numerous discussions on the problem of optimality of the stability exponents, and A.B. also thanks Viet Loc Bui for his explanations on some arithmetic issues and Laurent Niederman for his support. 

\addcontentsline{toc}{section}{References}
\bibliographystyle{amsalpha}
\bibliography{ExpOpt5}
\end{document}